\numberwithin{equation}{section}
\newtheorem{theorem}{Theorem}[section]
\newtheorem{conjecture}[theorem]{Conjecture}
\newtheorem{corollary}[theorem]{Corollary}
\newtheorem{lemma}[theorem]{Lemma}
\newtheorem{proposition}[theorem]{Proposition}
\newtheorem{situation}[theorem]{Situation}
\theoremstyle{definition}
\newtheorem{remark}[theorem]{Remark}
\newcommand{\Z}{\mathbb{Z}}
\newcommand{\Q}{\mathbb{Q}}
\newcommand{\CC}{\mathbb{C}}
\newcommand{\pri}{\mathfrak{p}}
\newcommand{\XX}{\mathcal{X}}
\newcommand{\Rcal}{\mathcal{R}}
\newcommand{\inject}{\hookrightarrow}
\newcommand{\onto}{\twoheadrightarrow}
\newcommand{\et}{\mathrm{\acute{e}t}}
\newcommand{\dR}{\mathrm{dR}}
\newcommand{\loc}{\mathrm{loc}}
\newcommand{\proj}{\mathbf{P}}
\newcommand{\aff}{\mathbf{A}}
\newcommand{\OO}{\mathcal{O}}
\newcommand{\VV}{\mathcal{V}}
\newcommand{\Hs}{\mathscr{H}}
\newcommand{\UU}{\mathcal{U}}
\DeclareSymbolFont{cyrillic}{T2A}{cmr}{m}{n}
\DeclareMathSymbol{\Sha}{\mathalpha}{cyrillic}{216}
\DeclareMathOperator{\GL}{GL}
\DeclareMathOperator{\Res}{Res}
\DeclareMathOperator{\Gal}{Gal}
\DeclareMathOperator{\Alb}{Alb}
\DeclareMathOperator{\Lie}{Lie}
\DeclareMathOperator{\rank}{rank}
\DeclareMathOperator{\pr}{pr}
\DeclareMathOperator{\codim}{codim}
\DeclareMathOperator{\Hod}{Hod}
\DeclareMathOperator{\MT}{\mathbf{MT}}
\title{Functional transcendence for the unipotent Albanese map}
\author{Daniel Rayor Hast}
\begin{document}

\begin{abstract}
  We prove a certain transcendence property of the unipotent Albanese map of a smooth variety, conditional on the Ax--Schanuel conjecture for variations of mixed Hodge structure. We show that this property allows the Chabauty--Kim method to be generalized to higher-dimensional varieties. In particular, we conditionally generalize several of the main Diophantine finiteness results in Chabauty--Kim theory to arbitrary number fields.
\end{abstract}

\maketitle

\section{Introduction}
Let $X$ be a smooth, connected, positive-dimensional algebraic variety of good reduction over a nonarchimedean local field $K/\Q_p$, and let $\XX/\OO_K$ be a smooth integral model. If $X$ is defined over a number field $F \subset K$, and $S$ is a finite set of primes of $F$ such that $\XX$ is defined over the ring of $S$-integers $\OO_{F, S}$, a fundamental problem in arithmetic geometry is to study the set of $S$-integral points $\XX(\OO_{F, S})$. In particular, when $X$ is a hyperbolic curve, $\XX(\OO_{F, S})$ is finite by Faltings' theorem \cite{Faltings83,Faltings84}, and there are major open problems around effective computation of $\XX(\OO_{F, S})$ and optimal or uniform bounds on the size of $\XX(\OO_{F, S})$. More generally, when $X$ is a variety of general type, Lang conjectured \cite[Conj.\ 5.7]{Lang86} that $\XX(\OO_{F, S})$ is non-Zariski-dense in $X$.

This paper is concerned with one approach to studying integral or rational points on varieties, the non-abelian Chabauty method. Most prior work in this method has placed substantial restrictions on the base field (in many cases, limited to $\Q$ only). In this paper, we show how a Hodge-theoretic conjecture of Klingler can be used to generalize the non-abelian Chabauty method to higher-dimensional varieties, and hence, via restriction of scalars, to curves over arbitrary number fields. The non-abelian Chabauty method produces $p$-adic analytic functions on which the set of integral points vanishes; the key to proving finiteness or non-Zariski-density results in higher dimensions is to show that these functions are independent of each other in a suitable sense.

We begin by briefly summarizing how the non-abelian Chabauty method works. Fix a base point $b \in \XX(\OO_K)$. Let $\Pi^{\dR}$ be the de Rham fundamental group of $X$ with base point $b$; denote the lower central series of $\Pi^{\dR}$ by $(\Pi^{\dR})^{n + 1} := [\Pi^{\dR}, (\Pi^{\dR})^n]$, and let $\Pi_n^{\dR} := \Pi^{\dR}/(\Pi^{\dR})^{n + 1}$. This group $\Pi_n^{\dR}$ is a unipotent $K$-algebraic group with Hodge filtration $F^\bullet \Pi_n^{\dR}$ and a Frobenius action induced by comparison with the crystalline fundamental group.

Kim \cite{Kim-alb} proves that $\Pi_n^{\dR}/F^0$ is a moduli space for \emph{admissible torsors} of $\Pi_n^{\dR}$, i.e., torsors for $\Pi_n^{\dR}$ with Hodge filtration and Frobenius action that are trivializable separately for the Hodge filtration and the Frobenius action (the trivial torsor corresponding to the case where the Hodge filtration and Frobenius action are compatible). Kim then constructs the \emph{unipotent Albanese map}, a rigid $K$-analytic map
\[
  j_n\colon \XX(\OO_K) \to (\Pi_n^{\dR}/F^0)(K)
\]
defined by sending each $x \in \XX(\OO_K)$ to the class of the path torsor $P_{b, x}^{\dR}$ defined via the de Rham fundamental groupoid. Kim proves \cite[Thm.\ 1]{Kim-alb} that the image of $j_n$ is Zariski-dense, and uses this to formulate a non-abelian generalization of Chabauty's method. The reason Zariski-density is useful is that, if $Z \subseteq \Pi_n^{\dR}/F^0$ is an algebraic subvariety of positive codimension, then $j_n^{-1}(Z)$ is a $K$-analytic subvariety of lower dimension than $\dim X$ (and in particular, $j_n^{-1}(Z)$ is finite if $X$ is a curve).

(In applications to integral or rational points over a number field $F$, the subvariety $Z$ consists of the torsors that arise from a global torsor via the localization map $\log_p = D \circ \loc_p$, defined as the composition
\[
\begin{tikzcd}
  H_f^1(\Gal(\bar{F}/F), \Pi_n^{\et}) \ar[r, "\loc_p"] & H_f^1(\Gal(\bar{F}_v/F_v), \Pi_n^{\et}) \ar[r, "D"] & \Res_{\Q_p}^{F_v}(\Pi_n^{\dR}/F^0),
\end{tikzcd}
\]
where $\loc_p$ is given by restricting the Galois action to the local Galois group, and $D$ is a comparison map arising from $p$-adic Hodge theory. The set of integral points factors through this subvariety via a global \'etale realization of the unipotent Albanese map. However, for the purposes of this paper, the origin of the subvariety $Z$ is irrelevant.)

In this paper, we will see that the unipotent Albanese maps conditionally satisfy a much stronger transcendence property, a $K$-analytic version of the Ax--Schanuel conjecture. From this, we deduce that, for $Z$ of high enough codimension, $j_n^{-1}(Z)$ is non-Zariski-dense (not just non-dense in the $K$-analytic topology). For simplicity, we only work on the residue disk $\UU \subseteq \XX(\OO_K)$ containing the base point $b$ (which is harmless for finiteness results since $\XX(\OO_K)$ is a finite union of residue disks); when we refer to an algebraic subvariety of a residue disk, we mean the intersection of an algebraic subvariety of $X$ with the residue disk.

\begin{theorem}
  \label{thm:main}
  Let $X$ be a smooth, connected, positive-dimensional algebraic variety of good reduction over a nonarchimedean local field $K/\Q_p$, and let $\XX/\OO_K$ be a smooth integral model. Let $n \geq 1$, and assume the Ax--Schanuel conjecture for the $n$-th canonical unipotent variations of mixed Hodge structure on $X(\CC)$ (under some fixed embedding $K \inject \CC$). Let $V \subseteq X \times \Pi_n^{\dR}/F^0$ be an algebraic subvariety. Let $\Gamma \subseteq X \times (\Pi_n^{\dR}/F^0)(K)$ be the graph of $j_n$. Let $W$ be an irreducible analytic component of $V \cap \Gamma \cap \UU$. Then
  \[
    \codim_{X \times \Pi_n^{\dR}/F^0}(W) \geq \codim_{X \times \Pi_n^{\dR}/F^0}(V) + \codim_{X \times \Pi_n^{\dR}/F^0}(\Gamma)
  \]
  unless $\pr_X(W)$ is contained in a proper weakly special subvariety of $X$ (in the sense of \cite[Definition 7.1]{Klingler17}) with respect to the $n$-th canonical unipotent variation of mixed Hodge structure.
\end{theorem}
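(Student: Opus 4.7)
The plan is to deduce this rigid $K$-analytic statement from the assumed complex-analytic Ax--Schanuel conjecture for the $n$-th canonical unipotent VMHS on $X(\CC)$. The bridge between the two analytic regimes rests on the observation that both $j_n$ and its complex analytic analogue $j_n^{\CC}$ (a local branch near $b$ of the period map of the canonical unipotent VMHS) are uniquely determined by the Gauss--Manin connection on the pro-unipotent de Rham local system with the common normalization $j_n(b) = j_n^{\CC}(b) = 0$. Consequently their formal Taylor expansions at $b$ in local algebraic coordinates on $X$ coincide, and at any other point $x \in X$ the expansions of $j_n$ and $j_n^{\CC}$ differ only by the constant term ($j_n(x)$ versus $j_n^{\CC}(\tilde x)$ for a lift $\tilde x$ of $x$). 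This common algebraic/formal origin is what allows a $p$-adic component to be transported to a complex one.

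To execute the reduction, let $W$ be an irreducible analytic component of $V \cap \Gamma \cap \UU$ violating the required codimension inequality, and fix a smooth point $w \in W$. The formal completion $\widehat{W}$ at $(w, j_n(w))$ is a formal component of $\widehat{V} \cap \widehat{\Gamma}$ of codimension equal to $\codim W$. Exploiting the algebraic left action of $\Pi_n^{\dR}$ on $\Pi_n^{\dR}/F^0$, I would translate $V$ by an element $g \in \Pi_n^{\dR}(\CC)$ carrying $j_n(w)$ to $j_n^{\CC}(\tilde w)$, obtaining a subvariety $V' = g \cdot V$ whose formal completion at $(w, j_n^{\CC}(\tilde w))$ realises $\widehat{W}$ as a formal component of $\widehat{V'} \cap \widehat{\Gamma^{\CC}}$, by the formal-expansion principle. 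Convergence and formal GAGA then produce a complex-analytic irreducible component $W^{\CC}$ of $V' \cap \Gamma^{\CC}$ of the same codimension as $W$, with the same Zariski closure of its projection to $X$ (since this closure is determined by the formal completion at $w$). Applying the Ax--Schanuel hypothesis to $W^{\CC}$ then yields either the desired codimension bound or places $\pr_X(W^{\CC}) = \pr_X(W)$, viewed as a Zariski-closed subvariety, in a proper weakly special subvariety of $X$; the conclusion transfers to $\pr_X(W)$ because weakly special subvarieties are intrinsic to the VMHS.

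The main obstacle is that the translate $V'$ need not be algebraic in the sense required by Klingler's formulation: the element $g$ is typically transcendental. The remedy is to work with the algebraic family of translates $\{g \cdot V\}_{g \in \Pi_n^{\dR}}$, which is an algebraic subvariety of $\Pi_n^{\dR} \times X \times \Pi_n^{\dR}/F^0$, and to invoke a suitable fibrewise or ``family'' version of Klingler's Ax--Schanuel --- or equivalently, to replace $V$ by the larger algebraic subvariety $\bigcup_{g \in \Pi_n^{\dR}} g \cdot V$ while tracking the codimension bookkeeping carefully. A secondary subtlety is to confirm that our $n$-th canonical unipotent VMHS falls within Klingler's general framework, and that his notion of ``weakly special'' coincides with the one implicit in the theorem's conclusion.
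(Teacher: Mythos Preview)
Your approach is essentially the paper's: reduce to the complex Ax--Schanuel statement (Lemma~\ref{lemma:higher-albanese}) by using that $j_n$ and the complex unipotent Albanese map are both horizontal for the same Gauss--Manin connection, hence have matching formal expansions in algebraic local coordinates. The paper carries this out by writing the graph $\Gamma$ locally near $w$ as the zero locus of convergent power series $G_1,\dots,G_r$, observing that the same series (viewed in $\CC$ via $K\hookrightarrow\CC$) converge and cut out a horizontal leaf $\Gamma'$ in $X(\CC)\times D$, and then invoking Lemma~\ref{lemma:higher-albanese}.

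Your ``main obstacle,'' however, is not an obstacle. Even if $g\in\Pi_n^{\dR}(\CC)$ is transcendental over $K$, left translation by $g$ is an automorphism of the complex algebraic variety $\Pi_n^{\dR}/F^0$: the action map $\Pi_n^{\dR}\times(\Pi_n^{\dR}/F^0)\to\Pi_n^{\dR}/F^0$ is a morphism of $\CC$-varieties, and you are merely specializing the first factor to a $\CC$-point. Hence $g\cdot V$ is an algebraic subvariety of $X\times\check D$ over $\CC$ of the same codimension as $V$, which is all Conjecture~\ref{conj:ax-schanuel} demands. No family version of Ax--Schanuel is needed, and your fallback of replacing $V$ by $\bigcup_{g}g\cdot V$ would destroy the codimension bookkeeping. (The paper in fact does not translate at all: it simply calls the horizontal leaf through $(w,j_n(w))$ ``a local lifting'' of $\alpha_n$. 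Since all horizontal leaves are $\Pi_n(\CC)$-translates of one another and such a translation preserves algebraicity of $V$, this is harmless.)

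One genuine technical point you skip: $V\cap\Gamma$ need not be a local complete intersection at $w$, so the codimension cannot be read off from a count of defining equations when you transfer to $\CC$. The paper handles this by selecting regular functions $F_1,\dots,F_s$ from the ideal of $V$ with $s+r=\codim_w(V\cap\Gamma)$ and $s<\codim V$ (possible precisely because of the assumed failure of the inequality), so that the enlarged algebraic set $V':=\{F_1=\cdots=F_s=0\}$ makes $V'\cap\Gamma'$ a local complete intersection of the right codimension; Ax--Schanuel is then applied to $V'$, and one concludes for $V\subseteq V'$.
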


(Note: in the case $n = 1$, the Ax--Schanuel conjecture is just the Ax--Schanuel theorem for abelian varieties \cite[Thm.~1]{Ax72}, and the weakly special subvarieties are translates of abelian subvarieties. In particular, Theorem \ref{thm:main} and Corollary \ref{cor:main} are unconditional for $n = 1$.)

\begin{corollary}
  \label{cor:main}
  In the setting of Theorem \ref{thm:main}, let $Z \subseteq \Pi_n^{\dR}/F^0$ be a closed algebraic subvariety of codimension at least $\dim X$. Then there is a finite subset $S \subseteq \UU$ such that $\UU \cap j_n^{-1}(Z) \setminus S$ is contained in a finite union of proper weakly special subvarieties of $X$.
\end{corollary}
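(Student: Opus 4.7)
The natural strategy is to apply Theorem~\ref{thm:main} with $V := X \times Z$. Setting $d := \dim X$ and $m := \dim \Pi_n^{\dR}/F^0$, the hypothesis on $Z$ gives $\codim V \geq d$, while $\Gamma$ is the $d$-dimensional graph of $j_n$, so $\codim \Gamma = m$. Hence the codimension inequality in Theorem~\ref{thm:main} becomes
\[
  \codim W \geq d + m = \dim(X \times \Pi_n^{\dR}/F^0),
\]
which forces $\dim W = 0$, so $W$ is a single point.

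Thus every irreducible analytic component $W$ of $V \cap \Gamma \cap \UU$ is either a single point or has $\pr_X(W)$ contained in a proper weakly special subvariety of $X$. Since $\Gamma$ is the graph of $j_n$, the projection $\pr_X$ restricts to an isomorphism of analytic spaces $V \cap \Gamma \cap \UU \cong \UU \cap j_n^{-1}(Z)$ matching irreducible components. So each irreducible analytic component of $\UU \cap j_n^{-1}(Z)$ is either a single point or is contained in a proper weakly special subvariety of $X$.

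The remaining ingredient is that $\UU \cap j_n^{-1}(Z)$ has only finitely many irreducible analytic components. Since $\XX$ is smooth at $b$, the residue disk $\UU$ is a rigid $K$-analytic polydisc of dimension $d$, and standard rigid-analytic facts (Noetherianity of closed analytic subspaces of affinoids, possibly after an exhaustion argument) yield this finiteness. I then let $S \subseteq \UU$ be the finite collection of zero-dimensional components, and for each of the finitely many remaining (positive-dimensional) components choose one proper weakly special subvariety of $X$ containing its image under $\pr_X$; the union of these finitely many weakly special subvarieties gives the conclusion.

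The substance of the argument lies entirely in the codimension calculation and the invocation of Theorem~\ref{thm:main}. The one mildly delicate point I anticipate is justifying the finiteness of irreducible components of $\UU \cap j_n^{-1}(Z)$, which I expect to be a standard consequence of the affinoid structure on the residue disk.
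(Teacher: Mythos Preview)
Your proof is correct and follows essentially the same route as the paper: set $V = X \times Z$, apply Theorem~\ref{thm:main}, compute that the codimension inequality forces $\dim W = 0$, and collect the zero-dimensional components into $S$. The only minor difference is in justifying finiteness of the irreducible analytic components: the paper attributes this to $Z$ having finitely many irreducible components, while you invoke Noetherianity of affinoids. Your justification is the cleaner one, and in fact no exhaustion argument is needed: the residue disk $\UU$ of a smooth $\OO_K$-scheme is a closed rigid-analytic polydisc, hence affinoid, so its Tate algebra is Noetherian and any closed analytic subspace has finitely many irreducible components.
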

\begin{proof}
  The proof closely follows \cite[Cor.~9.2]{Lawrence-Venkatesh}. Let $V = X \times Z$. Then $V \cap \Gamma \cap \UU = (\UU \cap j_n^{-1}(Z)) \times Z$. Since $Z$ has only finitely many irreducible components, $\UU \cap j_n^{-1}(Z) = \pr_X(V \cap \Gamma \cap \UU)$ also has only finitely many irreducible analytic components.

  Let $W$ be an irreducible analytic component of $V \cap \Gamma \cap \UU$. Applying Theorem \ref{thm:main}, either $\pr_X(W)$ is contained in a proper weakly special subvariety of $X$, or
  \begin{align*}
    \codim_{X \times \Pi_n^{\dR}/F^0}(W) &\geq \codim_{X \times \Pi_n^{\dR}/F^0}(V) + \codim_{X \times \Pi_n^{\dR}/F^0}(\Gamma) \\
    &= \codim_X(Z) + \dim(\Pi_n^{\dR}/F^0) \geq \dim X + \dim(\Pi_n^{\dR}/F^0).
  \end{align*}
  In the latter case, $W$ must be zero-dimensional.

  Let $S$ be the (finite) union of all of the zero-dimensional irreducible analytic components of $V \cap \Gamma \cap \UU$. Then $\UU \cap j_n^{-1}(Z) \setminus S$ is a finite union of sets that are each contained in a proper weakly special subvariety of $X$, completing the proof.
\end{proof}

The key input is the Ax--Schanuel conjecture for variations of mixed Hodge structure.

\begin{conjecture}[Ax--Schanuel for variations of mixed Hodge structure]
  \label{conj:ax-schanuel}
  Let $X$ be a smooth connected algebraic variety over $\CC$, and fix a base point $b \in X(\CC)$. Let $\Hs_\Z$ be a $\Z$-variation of mixed Hodge structure on $X$ with generic Mumford--Tate group $\MT_{\Hs_\Z}$. Let $\Lambda \subseteq \MT_{\Hs_\Z}(\Z)$ be the image of the monodromy representation $\pi_1(X, b) \to \MT_{\Hs_\Z}(\Z)$ (after passing to a finite cover of $X$ if necessary). Let $G$ be the $\Q$-Zariski closure of $\Lambda$ in $\MT_{\Hs_\Z}(\Z)$, and let $D = D(G)$ be the weak mixed Mumford--Tate domain associated to $G$. Let $\varphi\colon X(\CC) \to \Lambda \backslash D$ be the period map of $\Hs_\Z$. Let $V \subseteq X \times \check{D}$ be an algebraic subvariety, where $\check{D}$ is the compact dual of $D$. Let $W$ be an irreducible analytic component of $V \cap \Lambda$ such that the projection of $W$ to $X$ is not contained in any proper weakly special subvariety. Then
  \[
    \codim_{X \times \check{D}}(W) \geq \codim_{X \times \check{D}}(V) + \codim_{X \times \check{D}}(\Lambda).
  \]
\end{conjecture}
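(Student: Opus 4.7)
The plan is to adapt the o-minimal Pila--Zannier strategy, in the form developed by Bakker--Tsimerman for pure variations of Hodge structure, to the mixed setting. Let $\pi\colon \tilde{X} \to X$ be a universal cover and $\tilde{\varphi}\colon \tilde{X} \to D$ the lifted period map, whose graph $\tilde{\Gamma}$ sits inside $\tilde{X} \times \check{D}$. The goal is to control the dimensions of analytic components of $(\pi \times \mathrm{id})^{-1}(V) \cap \tilde{\Gamma}$ under the hypothesis that the projection to $X$ is not contained in any proper weakly special subvariety.

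First I would establish that $\tilde{\varphi}$, restricted to a suitable fundamental set $\mathfrak{F}$ for the $\Lambda$-action, is definable in the o-minimal structure $\mathbb{R}_{\mathrm{an}, \exp}$. For pure variations this is the theorem of Bakker--Klingler--Tsimerman. The weak mixed Mumford--Tate domain $D$ fibers over a pure period domain with fibers isomorphic to affine spaces parametrizing extension data; definability in the mixed setting requires extending the nilpotent orbit theorem and the Schmid--Cattani--Kaplan asymptotics to these unipotent fibers, controlling the growth of the extension coordinates along the boundary divisors of a log-smooth compactification of $X$.

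Next, I argue by contradiction: suppose $W$ is an atypical analytic component whose projection to $X$ lies in no proper weakly special subvariety. Pulling back to $\tilde{X}$ and using o-minimal Chow (definable GAGA), the set of $\Lambda$-translates $\gamma \cdot \tilde{W}$ that meet $\mathfrak{F}$ forms a definable family. A volume/intersection-count estimate, analogous to the hyperbolic volume bounds of Mok--Pila--Tsimerman in the pure Shimura setting, would bound how many such translates can realize an atypical intersection of the prescribed dimension; combining this with the Pila--Wilkie counting theorem produces polynomially many $\gamma \in \Lambda(\Q)$ of bounded height for which $\gamma \cdot \tilde{W}$ and $\tilde{W}$ agree on a large analytic subvariety, and these polynomial bounds force infinitely many algebraic relations.

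A standard monodromy/stabilizer argument of Ullmo--Yafaev type then promotes this to a positive-dimensional algebraic subgroup $H \leq G$ stabilizing $\tilde{W}$, from which $\pr_X(W)$ is forced to lie in a weakly special subvariety associated to $H$, contradicting the hypothesis. The main obstacle is the volume/counting step in the mixed setting: because $D$ is not homogeneous under a reductive group, one cannot directly invoke the hyperbolic geometry of Hermitian symmetric spaces used by Mok--Pila--Tsimerman and Bakker--Tsimerman, and one must instead develop estimates that separately control the pure quotient and the affine unipotent fiber coordinates while respecting the monodromy-equivariant fibration structure of $D$. A secondary difficulty is establishing the correct notion of weakly special subvariety relative to the mixed structure and verifying that the algebraic subgroups produced by the counting step are of Mumford--Tate type in the mixed sense.
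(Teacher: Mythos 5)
This statement is labeled as a \emph{conjecture} in the paper, not a theorem, and the paper supplies no proof of it. It is Klingler's Ax--Schanuel conjecture for variations of mixed Hodge structure, and the paper's results (Theorem~\ref{thm:main}, Corollary~\ref{cor:main}, Theorem~\ref{thm:curves}) are all explicitly \emph{conditional} on it, except in the $n = 1$ case where it reduces to Ax's theorem for abelian varieties. So there is no proof in the paper against which to compare your argument.

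That said, the strategy you sketch --- a Bakker--Tsimerman-style o-minimal/Pila--Wilkie argument, definability of the restricted period map on a fundamental set, a volume/intersection-count bound in the spirit of Mok--Pila--Tsimerman, and an Ullmo--Yafaev stabilizer argument to produce a weakly special subvariety --- is the expected route and is essentially the program that was later carried out in the mixed setting. But as written your proposal is not a proof: you yourself flag the volume/counting estimate on the non-homogeneous mixed domain and the identification of the algebraic subgroup $H$ as one of Mumford--Tate type as open obstacles rather than steps you have resolved. Each of these is a substantive piece of work (definability of mixed period maps requires a mixed nilpotent-orbit theorem; the counting bound requires controlling the unipotent fiber coordinates along boundary strata; the stabilizer step requires that $G$ and the monodromy group interact with the weight filtration as in the pure case). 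Until those ingredients are supplied, the argument is a plan, not a proof --- which is consistent with the paper treating the statement as a conjecture rather than a theorem.
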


To deduce Theorem \ref{thm:main} from Conjecture \ref{conj:ax-schanuel} (or from \cite[Thm.~1]{Ax72} in the $n = 1$ case), there are two main steps:
\begin{enumerate}
  \item Show that the complex unipotent Albanese map is (almost) the period map for a variation of mixed Hodge structure.
  \item Formally deduce properties of the $p$-adic unipotent Albanese map from corresponding properties of the complex map.
\end{enumerate}

Once this has been done, we use Corollary \ref{cor:main} to deduce various Diophantine consequences via the Chabauty--Kim method. Our main results for curves are in the following setting:
\begin{situation}
  \label{curve-situation}
  Let $X$ be a smooth, geometrically connected, hyperbolic algebraic curve over a number field $F$ such that Conjecture \ref{conj:ax-schanuel} holds for the canonical unipotent variations of mixed Hodge structure on $X$. Let $S$ be a finite set of primes of $F$ containing all primes of bad reduction for $X$. Suppose we are in one of the following settings:
  \begin{enumerate}
    \item The Fontaine--Mazur conjecture or the Bloch--Kato conjecture is true.
    \item $X = \proj^1 \setminus \{p_1, \dots, p_s\}$, where $s \geq 3$ and $p_1, \dots, p_s \in \OO_{F, S} \cup \{\infty\}$.
    \item $X$ is a CM elliptic curve minus the origin.
    \item There exists a dominant regular map $X_{\bar{F}} \to Y_{\bar{F}}$ for some smooth projective curve $Y$ over $F$ of genus $g \geq 2$ such that the Jacobian variety of $Y$ has CM over $\bar{F}$.
  \end{enumerate}
\end{situation}

\begin{theorem}
  \label{thm:curves}
  In Situation \ref{curve-situation}, let $V \subseteq \Res_\Q^F X$ be an irreducible, positive-dimensional closed $\Q$-subvariety. Let $\varphi\colon V' \to V$ be a surjective morphism of $\Q$-varieties such that $V'$ is smooth and $\varphi$ is birational. Suppose $S$ also contains all primes lying above primes of bad reduction for $V'$. Let $p$ be a rational prime that splits completely in $F$, such that $\pri \notin S$ for all $\pri$ lying above $p$. Let $\VV'$ be a smooth $S$-integral model of $V'$, let $b \in \VV'(\OO_{F, S})$ be a base point, and let $\UU \subseteq \VV'(\OO_{F_\pri})$ be the residue disk containing $b$. Let $j_{V', n}\colon \UU \to (\Pi_{V', n}^{\dR}/F^0)(\Q_p)$ be the unipotent Albanese map. Then for all sufficiently large $n$, the Chabauty--Kim locus
  \[
  \UU_{V', n} := j_{V', n}^{-1}(\log_p(H_f^1(\Gal(\bar{\Q}/\Q), \Pi_{V', n}))) \subseteq \UU
  \]
  is non-Zariski-dense in $V'$.
\end{theorem}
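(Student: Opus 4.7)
The strategy is to apply Corollary \ref{cor:main} to the smooth variety $V'$ at the prime $\pri$ (note $F_\pri \cong \Q_p$ since $p$ splits completely in $F$), taking $Z$ to be the Zariski closure in $\Pi_{V', n}^{\dR}/F^0$ of the image of the Selmer variety under the localization map. To invoke the corollary, I first verify its hypotheses for $V'$ at $\pri$: good reduction is guaranteed by the assumption that $S$ contains all primes of bad reduction for $V'$, and since $V'$ is birational to $V \subseteq \Res_\Q^F X$ whose base change to $\CC$ is a subvariety of $\prod_{\sigma\colon F \inject \CC} X_\sigma(\CC)$, the canonical unipotent VMHS on $V'(\CC)$ is obtained by pullback and restriction from a product of canonical unipotent VMHS's on the $X_\sigma(\CC)$. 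The assumed Ax--Schanuel conjecture for $X$ therefore transfers to $V'$ by a product-and-pullback argument on period maps.

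The heart of the proof is the codimension bound
\[
  \dim\bigl(\Pi_{V', n}^{\dR}/F^0\bigr) - \dim H_f^1\bigl(\Gal(\bar{\Q}/\Q), \Pi_{V', n}\bigr) \geq \dim V',
\]
required to invoke Corollary \ref{cor:main}. In the case $V = \Res_\Q^F X$, the product decomposition $\Res_\Q^F X \times_\Q \bar\Q \cong \prod_\sigma X_{\bar\Q, \sigma}$ yields $\Pi_{V, n}^{\dR} \cong \prod_\sigma \Pi_{X, n}^{\dR}$, so $\dim(\Pi_{V, n}^{\dR}/F^0) = [F:\Q] \cdot \dim(\Pi_{X, n}^{\dR}/F^0)$, while an appropriate unipotent Bloch--Kato variant of Shapiro's lemma identifies $H_f^1(\Gal(\bar\Q/\Q), \Pi_{V, n})$ with $H_f^1(\Gal(\bar F/F), \Pi_{X, n})$. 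Since $\dim V = [F:\Q] \cdot \dim X$, the required inequality reduces to
\[
  [F:\Q]\,\bigl(\dim(\Pi_{X, n}^{\dR}/F^0) - \dim X\bigr) \geq \dim H_f^1(\Gal(\bar F/F), \Pi_{X, n}),
\]
which is a significant relaxation of the standard $F$-adic Chabauty--Kim inequality, thanks to the extra factor of $[F:\Q]$ on the left. For a general irreducible $V$, a parallel analysis applies using the structure on $\Pi_{V', n}$ induced from $\Pi_{\Res_\Q^F X, n}$ via $V \inject \Res_\Q^F X$.

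This reduced inequality holds for all sufficiently large $n$ in each case of Situation \ref{curve-situation}: case (1) follows from the Selmer-variety dimension bounds implied by Fontaine--Mazur or Bloch--Kato; case (2) follows from explicit mixed-Tate computations of Dan-Cohen--Wewers and Corwin--Dan-Cohen; case (3) follows from the weight-filtration analysis for the unipotent fundamental group of a CM elliptic curve minus the origin; case (4) follows from the CM Jacobian bounds of Coates--Kim, combined with functoriality of the unipotent Albanese along the dominant map $X_{\bar F} \to Y_{\bar F}$. In each case, the essential input is an upper bound on $\dim H_f^1(\Gal(\bar F/F), \Pi_{X, n})$ that grows strictly slower than $\dim(\Pi_{X, n}^{\dR}/F^0)$ as $n \to \infty$. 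I anticipate the main obstacle to be tracking the dependence on $[F:\Q]$ cleanly in case (1), and properly extending the dimension bounds from $V = \Res_\Q^F X$ to an arbitrary subvariety $V$ after birational resolution.

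With the codimension inequality verified, Corollary \ref{cor:main} furnishes a finite subset $S' \subseteq \UU$ together with a finite collection of proper weakly special subvarieties of $V'$ whose union contains $\UU \cap j_{V', n}^{-1}(Z) \setminus S'$, and hence contains $\UU_{V', n} \setminus S'$. Since the union of $S'$ with finitely many proper algebraic subvarieties of $V'$ is non-Zariski-dense in $V'$, the set $\UU_{V', n}$ is non-Zariski-dense in $V'$, completing the proof.
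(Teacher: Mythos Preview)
Your overall architecture matches the paper's exactly: isolate the codimension bound (the paper packages this as Lemma~\ref{lemma:DH}), then feed it into Corollary~\ref{cor:main} to get non-Zariski-density. The final paragraph of your proposal is essentially the paper's two-line deduction of the theorem from the lemma.

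The substantive difference is in how you propose to prove the codimension bound. You suggest, for $V=\Res_\Q^F X$, a Shapiro-type identification $H_f^1(G_\Q,\Pi_{R,n})\cong H_f^1(G_F,\Pi_{X,n})$ together with $\Pi_{R,n}^{\dR}/F^0\cong(\Pi_{X,n}^{\dR}/F^0)^{[F:\Q]}$, and then appeal to a ``parallel analysis'' for general $V'$. The paper does not argue this way. Instead it passes to the quotient $\Upsilon_n$ of $\Pi_{V',n}$ given by the image of $\Pi_{V'}\to\Pi_R$, and proves the stronger statement $\dim H_f^1(G_T,U_n)<c\cdot\dim U_n^{\dR}/F^0$ for some $c<1$ and a further quotient $U_n$ of $\Upsilon_n$ (different in each case of Situation~\ref{curve-situation}). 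The key device is the pair of comparisons $Z_n\twoheadrightarrow Z_{X,n}^{\oplus\dim V'}$ (from the dominant map $V'_F\to X_F^{\dim V'}$) and $Z_n\hookrightarrow Z_{X,n}^{\oplus[F:\Q]}$ (from $\Upsilon\subseteq\Pi_R$), which sandwich $\dim Z_n$ between constant multiples of $\dim Z_{X,n}$. Then the Euler characteristic formula reduces everything to bounding $\dim Z_n^{+}$, $\dim F^0$, and $\dim H^2(G_T,Z_n)$, and the injection into $Z_{X,n}^{\oplus[F:\Q]}$ lets one import the known bounds for $X$ via semisimplicity and corestriction. Your Shapiro approach is clean for the full $R$ but does not obviously survive restriction to a proper subvariety $V$ and birational modification to $V'$, which is exactly the obstacle you flag; the paper's quotient-and-sandwich method is designed to handle precisely this.

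One minor point: you raise the issue of transferring Ax--Schanuel from $X$ to $V'$ via products and pullback. The paper does not spell this out; it simply applies Corollary~\ref{cor:main} to $V'$, implicitly reading the hypothesis in Situation~\ref{curve-situation} as covering the relevant variations. Your observation that this deserves justification is fair, though the pullback-of-period-maps argument you sketch is the right idea.
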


For $F = \Q$, the above result is already a theorem, due to \cite{Kim-siegel,Kim-alb,Kim-elliptic,CK,EH18} in the various cases of Situation \ref{curve-situation}.

\begin{corollary}
  \label{cor:finiteness}
  In Situation \ref{curve-situation}, let $\XX$ be a smooth $S$-integral model of $X$. Then $\XX(\OO_{F, S})$ is finite.
\end{corollary}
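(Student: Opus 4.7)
The plan is to combine restriction of scalars with Theorem \ref{thm:curves}, turning the finiteness problem for $\XX(\OO_{F,S})$ into a Zariski-density statement on $Y := \Res_\Q^F X$, a smooth $\Q$-variety of dimension $[F:\Q]$.

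I would first fix a suitable $\Z_{S'}$-integral model $\YY$ of $Y$ (where $S'$ is the set of rational primes below $S$), so that the restriction-of-scalars adjunction identifies $\XX(\OO_{F,S})$ with $\YY(\Z_{S'})$. Enlarging $S$ only enlarges $\XX(\OO_{F,S})$, so it is harmless to enlarge $S$ as convenient in the argument; by Chebotarev I also fix a rational prime $p \notin S'$ that splits completely in $F$. Let $\Sigma := \XX(\OO_{F,S})$, viewed as a subset of $Y(\Q)$, and let $V_\Sigma \subseteq Y$ be its Zariski closure. The goal becomes $\dim V_\Sigma = 0$. Suppose for contradiction that some irreducible component $V$ of $V_\Sigma$ has positive dimension; then, because $V$ is a component, the set of points of $\Sigma$ in $V$ but not in any other component is Zariski-dense in $V$.

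Applying Theorem \ref{thm:curves} to this $V$ with a smooth birational model $\varphi\colon V' \to V$, and enlarging $S$ further so that $\varphi$ extends to an isomorphism of $S$-integral schemes over a suitable open subscheme, this Zariski-dense set lifts to a Zariski-dense subset $\Sigma' \subseteq \VV'(\OO_{F,S})$. The set $\Sigma'$ sits inside a compact $p$-adic space that is partitioned into finitely many residue disks, indexed by $\mathbb{F}_p$-points of the smooth special fiber of $\VV'$. From each residue disk meeting $\Sigma'$ I would choose an integral point as the base point $b$ required by Theorem \ref{thm:curves}; then for $n$ large the associated Chabauty--Kim locus $\UU_{V',n}$ is non-Zariski-dense in $V'$ and, by the standard global-to-local construction underlying the Chabauty--Kim method, contains every element of $\Sigma'$ in its residue disk. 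Taking the finite union over all of these disks shows that $\Sigma'$ is contained in a proper closed subvariety of $V'$, contradicting its Zariski-density and completing the argument.

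The main obstacle is the bookkeeping around the birational model $V' \to V$: one must enlarge $S$ so that $\VV'$ is a smooth $S$-integral model and the birational comparison is defined $S$-integrally, allowing Zariski-density of integral points to be transported between $V$ and $V'$. Once this is arranged, the finiteness of the residue-disk decomposition (ensured by the special fiber being of finite type over $\mathbb{F}_p$) and the containment of integral points in the Chabauty--Kim locus (a standard feature of the method) make the rest of the argument routine.
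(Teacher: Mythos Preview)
Your argument is correct and follows essentially the same approach as the paper: reduce via restriction of scalars to showing that integral points are non-Zariski-dense in each positive-dimensional irreducible $V \subseteq \Res_\Q^F X$, pass to a smooth birational model $V'$ (enlarging $S$ as needed), and apply Theorem~\ref{thm:curves} on each residue disk. The only cosmetic differences are that the paper resolves all of $R$ and takes the strict transform of $V$ rather than resolving $V$ directly, and it uses properness of the resolution (rather than spreading out the birational inverse over an open set) to transport integral points between $V$ and $V'$.
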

\begin{proof}
  Let $\Rcal$ be a smooth $S'$-integral model of $R = \Res_\Q^F X$, where $S'$ is the set of primes of $\Z$ lying below primes in $S$. Restriction of scalars is compatible with base change, so $\Rcal(\Z_{S'}) = \XX(\OO_{F, S})$. To prove finiteness of $\Rcal(\Z_{S'})$, it suffices to show that $\Rcal(\Z_{S'})$ is non-Zariski-dense in every irreducible, positive-dimensional closed $\Q$-subvariety $V \subseteq R$.

  By resolution of singularities, we can construct a surjective proper morphism of smooth $\Q$-varieties $\varphi\colon R' \to R$ such that $\varphi$ is birational, the strict transform $V' \subseteq R'$ of $V$ is smooth, and the exceptional locus of $\varphi$ is transverse to $V'$. Let $T$ be a finite set of rational primes such that $S' \subseteq T$, the varieties $R'$, $V'$, and $V$ and the map $\varphi$ are defined over $\Z_T$, and $R'$ and $V'$ have good reduction at every prime $\ell \notin T$. Let $\VV'$ be a smooth $T$-integral model of $V'$. Applying Theorem \ref{thm:curves} to each residue disk of $\VV'$, we see that $\VV'(\Z_T)$ is non-Zariski-dense in $V'$. Since $\varphi$ is proper and is an isomorphism away from an exceptional locus transverse to $V'$, it follows that $\Rcal(\Z_T) \cap V$ is non-Zariski-dense in $V$. But $\Z_{S'} \subseteq \Z_T$, so we are done.
\end{proof}

Dogra \cite{Dogra19} has independently proven unlikely intersection results closely related to those of this paper, using purely $p$-adic (as opposed to complex Hodge-theoretic) methods.

The structure of this paper is as follows: In \S\ref{section:period-domains}, we discuss mixed Hodge varieties, variations of mixed Hodge structure, and the Ax--Schanuel conjecture, and we describe the unipotent Albanese map over the complex numbers in terms of certain canonical variations of Hodge structure. In \S\ref{section:p-adic}, we transfer from the complex setting to the $p$-adic setting, using the results of \S2 to deduce Theorem \ref{thm:main}. In \S\ref{section:chabauty}, we note an implication for Chabauty's method of the Ax--Schanuel theorem for abelian varieties; this is the abelian analogue of the main results of this paper. In \S\ref{section:chabauty-kim}, we prove Theorem \ref{thm:curves} by making the necessary modifications to the proofs over $\Q$. Some possible directions of future work are discussed in \S\ref{section:future}.

\subsection*{Acknowledgements}
This work was supported in part by the Simons Collaboration on Arithmetic Geometry, Number Theory, and Computation. The author is grateful to Jordan Ellenberg, Daniel Erman, Bruno Klingler, Jacob Tsimerman, and Anthony V\'{a}rilly-Alvarado for helpful discussions related to this paper. The author also thanks Netan Dogra and an anonymous referee for their helpful comments on previous revisions of this paper.

\section{Higher Albanese manifolds as period domains}
\label{section:period-domains}
In this section, we show that the complex unipotent Albanese map is almost the period map for the canonical variation of Hodge structure, which assigns to each $x \in X$ the truncated path algebra $P_{b, x} := \Z\pi_1(X; b, x)/J^{n + 1}$ (where $J$ is the augmentation ideal). The primary references are \cite{HZ87}, \cite{BaTs17}, and \cite{Klingler17}.

Let $X$ be a smooth connected variety over $\CC$. Choose a point $b \in X(\CC)$. Let $\Pi$ be the unipotent completion of the fundamental group of $X$ with base point $b$, and let $\Pi_n$ be the quotient of $\Pi$ by the $(n + 1)$-st level of the lower central series. This is an algebraic group over $\CC$ that comes equipped with a natural mixed Hodge structure.

The $n$-th \emph{higher Albanese manifold} is the complex manifold
\[
  \Alb_n(X) := \pi_1(X, b) \backslash \Pi_n(\CC) / F^0 \Pi_n(\CC).
\]
This does not depend on $b$, and is typically not an algebraic variety for $n > 1$. (For $n = 1$, this is the classical Albanese variety.)

The \emph{unipotent Albanese map} is the map
\[
  \alpha_n\colon X(\CC) \to \Alb_n(X)
\]
defined via Chen's $\pi_1$-de Rham theory by mapping $x \in X(\CC)$ to the iterated integration functional $\omega \mapsto \int_{b}^{x} \omega$; the quotient by the topological fundamental group ensures this is independent of the choice of path from $b$ to $x$. This map does depend on $b$.

By \cite[Prop.~4.22]{HZ87}, the map $x \mapsto P_{b, x}$ defines a graded-polarizable variation of mixed Hodge structure $\Hs$ on $X(\CC)$, called the $n$-th \emph{canonical variation}. This variation is \emph{unipotent} in the sense that the variations of (pure) Hodge structure on the graded quotients for the weight filtration are constant.

Let $M_n$ be the generic Mumford--Tate group of the $n$-th canonical variation. The monodromy representation for the canonical variation is given by
\begin{align*}
  \rho\colon \pi_1(X, b) &\to \GL(P_{b, x}), \\
  \beta &\mapsto (\gamma \mapsto \gamma \beta^{-1}),
\end{align*}
the right regular representation of $\pi_1(X, b)$ on the $P_{b, b}$-bimodule $P_{b, x}$. Since the canonical variations are unipotent, the monodromy representation $\rho$ is also unipotent. Let $G_n$ be the Zariski closure of the image of the monodromy representation. Then $G_n$ is unipotent, so $\rho$ factors through the unipotent completion $\pi_1(X, b) \to \Pi$. The kernel of $\rho$ is $J^{n + 1} \cap \pi_1(X, b)$, so $G_n \cong \Pi_n$.

The \emph{weak mixed Mumford--Tate domain} $D = D(\Pi_n)$ associated to the monodromy group $\Pi_n$ is the $\Pi_n(\CC)$-orbit of the canonical mixed Hodge structure on $P_{b, b}$ in the full period domain of graded-polarized mixed Hodge structures on $P_{b, b}$. In particular, $D$ is a homogeneous space for $\Pi_n(\CC)$, and is a mixed Mumford--Tate domain in the sense of \cite[\S3.1]{Klingler17}.

As explained in \cite[\S5]{HZ87}, the stabilizer of a point in $D$ is $F^0 \Pi_n(\CC)$; thus, $D \cong \Pi_n(\CC)/F^0 \Pi_n(\CC)$. One quotient of $D$ by a discrete group is the higher Albanese manifold:
\[
\Alb_n(X) = \pi_1(X, b) \backslash D.
\]
Another such quotient is the connected mixed Hodge variety \cite[\S3]{Klingler17}
\[
  \Hod^0(X, \Hs, D) := \Hod_\Lambda^0(\Pi_n, D) = \Lambda \backslash D,
\]
where $\Lambda = (\Pi_n(\Q) \cap \GL(P_{b, b}))$. The action of $\pi_1(X, b)$ on $D$ factors through $\Pi_n(\Q) \cap \GL(P_{b, b})$, so there is a natural covering map
\[
\Alb_n(X) \to \Hod^0(X, \Hs).
\]
By \cite[Cor.~5.20(i)]{HZ87}, the period map
\[
\varphi_n\colon X(\CC) \to \Hod^0(X, \Hs)
\]
factors through this covering map via the unipotent Albanese map $\alpha_n$. (Hain and Zucker are using a different classifying space as the target of the period map, but the conclusion is the same since one just has to show that $\varphi_n(x)$ depends only on $\alpha_n(x)$.)

Now we can immediately deduce:
\begin{lemma}
  \label{lemma:higher-albanese}
  Let $X$ be a smooth connected variety, and fix a point $b \in X(\CC)$ and an integer $n \geq 1$. If $n > 1$, assume Conjecture \ref{conj:ax-schanuel} for the $n$-th canonical variation on $X$. Let $\alpha_n\colon X \to \Alb_n(X)$ be the unipotent Albanese map with base point $b$. Let $D = \Pi_n(\CC)/F^0 \Pi_n(\CC)$. Let $V \subseteq X \times \Pi_n/F^0 \Pi_n$ be an algebraic subvariety. Let $\Gamma \subseteq X(\CC) \times D$ be the graph of $\alpha_n$ pulled back via the covering map $D \to \Alb_n(X)$. Let $W$ be an irreducible analytic component of $V \cap \Gamma$ such that the projection of $W$ to $X$ is not contained in any proper weakly special subvariety. Then
  \[
    \codim_{X \times D}(W) \geq \codim_{X \times D}(V) + \codim_{X \times D}(\Gamma).
  \]
\end{lemma}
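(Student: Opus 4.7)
The plan is to apply Conjecture \ref{conj:ax-schanuel} directly to the $n$-th canonical variation of mixed Hodge structure $\Hs$ on $X(\CC)$, after identifying all the relevant pieces of data. The preceding discussion has already arranged most of what is needed: the $\Q$-Zariski closure of the monodromy image equals $\Pi_n$ (the unipotent quotient of the topological $\pi_1$), the weak mixed Mumford--Tate domain is $D = \Pi_n(\CC)/F^0 \Pi_n(\CC)$, and the period map $\varphi_n \colon X(\CC) \to \Lambda \backslash D = \Hod^0(X, \Hs)$ factors through the higher Albanese manifold $\Alb_n(X) = \pi_1(X,b) \backslash D$ via the unipotent Albanese map $\alpha_n$.

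The first step is to match the ambient spaces. Since $\Pi_n$ is unipotent, the homogeneous space $\Pi_n/F^0\Pi_n$ is naturally an affine algebraic variety over $\CC$, and one checks that its analytification is precisely $D$; thus in our setting the compact dual $\check D$ in Conjecture \ref{conj:ax-schanuel} may be identified with the algebraic variety $\Pi_n/F^0\Pi_n$, so that the ambient product $X \times \check D$ agrees with $X \times \Pi_n/F^0\Pi_n$ appearing in the lemma. Under this identification, the algebraic subvariety $V$ of the lemma becomes an algebraic subvariety of $X \times \check D$, so the hypothesis on $V$ in the conjecture is satisfied.

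The next step is to match the graphs. The period map $\varphi_n$ lifts, after passing to $D$, to the same analytic map as the lift of $\alpha_n$: both are given by iterated integration against the canonical unipotent variation, the only difference being which discrete group one quotients by at the end ($\Lambda$ vs.\ the image of $\pi_1(X,b)$ in $\Lambda$). Consequently the graph $\Gamma_\varphi \subseteq X(\CC) \times D$ of the lifted period map and the graph $\Gamma$ of $\alpha_n$ pulled back to $D$ differ only by the action of the discrete group $\Lambda$ on the $D$-factor, so their irreducible analytic components are translates of one another. In particular, an irreducible analytic component $W$ of $V \cap \Gamma$ translates to an irreducible analytic component of $(1 \times \lambda)V \cap \Gamma_\varphi$ for some $\lambda \in \Lambda$; replacing $V$ by the algebraic subvariety $(1 \times \lambda)V$ (which has the same codimension) allows us to assume $W$ is a component of $V \cap \Gamma_\varphi$.

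At this point, the hypothesis that $\pr_X(W)$ is not contained in a proper weakly special subvariety of $X$ (with respect to $\Hs$) is exactly the hypothesis of Conjecture \ref{conj:ax-schanuel} on $W$, and the conclusion
\[
  \codim_{X \times \check D}(W) \geq \codim_{X \times \check D}(V) + \codim_{X \times \check D}(\Gamma_\varphi)
\]
translates directly into the codimension inequality claimed in the lemma (noting that $\codim \Gamma = \codim \Gamma_\varphi = \dim D$ since both are graphs). The case $n = 1$ is handled identically, using Ax's theorem in place of Conjecture \ref{conj:ax-schanuel}: here $\Pi_1$ is the Lie algebra of the Albanese variety, $D$ is its universal cover $\mathrm{Lie}(\Alb(X))$, $\check D$ is the additive group of that Lie algebra viewed as an algebraic variety, and weakly special subvarieties are translates of abelian subvarieties, so Ax's theorem applies verbatim. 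The main obstacle in this deduction is really bookkeeping rather than mathematical content: one must carefully verify that the unipotent compact dual is as claimed, that the period map and the unipotent Albanese map agree after lifting to $D$ (up to the action of a discrete group), and that the notion of weakly special subvariety attached to the canonical variation matches in both statements.
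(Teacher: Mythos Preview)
Your proposal is correct and takes essentially the same approach as the paper: identify the lemma as a direct instance of Conjecture \ref{conj:ax-schanuel} (or Ax's theorem when $n=1$) once one recognizes that the weak Mumford--Tate domain for the canonical variation is $D=\Pi_n(\CC)/F^0\Pi_n(\CC)$ and that the period map and the unipotent Albanese map have the same lift to $D$. The paper's proof is terser, simply noting that because $D$ compatibly covers both $\Alb_n(X)$ and $\Hod^0(X,\Hs)$, the pulled-back graph $\Gamma$ is the same in both formulations; your $\Lambda$-translation maneuver is therefore unnecessary (since $\Gamma$ is already a union of components of $\Gamma_\varphi$, any irreducible component $W$ of $V\cap\Gamma$ is automatically a component of $V\cap\Gamma_\varphi$), though it is harmless.
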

\begin{proof}
  Applying Conjecture \ref{conj:ax-schanuel} (or \cite[Thm.~1]{Ax72} if $n = 1$), we obtain the lemma where the higher Albanese manifold is replaced with the mixed Hodge variety $\Hod^0(X, \Hs)$ associated to the $n$-th canonical variation of Hodge structure. Since $\Pi_n(\CC)/F^0 \Pi_n(\CC)$ is compatibly a covering space of both $\Alb_n(X)$ and $\Hod^0(X, \Hs)$, the graph $\Gamma$ is the same in both cases, so this is in fact a special case of the Ax--Schanuel conjecture.
\end{proof}

\section{\texorpdfstring{$p$}{p}-adic Ax--Schanuel for the unipotent Albanese map}
\label{section:p-adic}
In this section, we deduce Theorem \ref{thm:main} from Lemma \ref{lemma:higher-albanese} by transferring from the complex setting to the $p$-adic setting. This argument closely follows (and was partly inspired by) Lawrence and Venkatesh's application of Ax--Schanuel for a pure variation of Hodge structure \cite[\S9]{Lawrence-Venkatesh}, which they also use to deduce Diophantine consequences (in their case for integral points in certain families of varieties).

Let $X$ be a smooth, connected, positive-dimensional algebraic variety of good reduction over a nonarchimedean local field $K/\Q_p$, and let $\XX/\OO_K$ be a smooth integral model. Fix a base point $b \in \XX(\OO_K)$, and let $\UU \subseteq \XX(\OO_K)$ be the residue disk containing $b$. Let $\Pi^{\dR}$ be the de Rham fundamental group of $X$ with base point $b$, and let $\Pi_n^{\dR}$ be the quotient of $\Pi^{\dR}$ by the $(n + 1)$-st level of the lower central series. Recall from the introduction the $p$-adic unipotent Albanese map
\[
j_n\colon \UU \to (\Pi_n^{\dR}/F^0)(K).
\]
Let $V \subseteq X \times \Pi_n^{\dR}/F^0$ be an algebraic subvariety, and let $\Gamma \subseteq \UU \times (\Pi_n^{\dR}/F^0)(K)$ be the graph of $j_n$. Let $W$ be an irreducible analytic component of $V(K) \cap \Gamma$, and suppose
\begin{equation}
  \label{eqn:codim-assumption}
  \codim_{X \times \Pi_n^{\dR}/F^0}(W) < \codim_{X \times \Pi_n^{\dR}/F^0}(V) + \codim_{X \times \Pi_n^{\dR}/F^0}(\Gamma).
\end{equation}
To prove Theorem \ref{thm:main}, we must show that the projection $\pr_X(W)$ of $W$ to $X$ is contained in a proper weakly special subvariety of $X$.

We work locally around an arbitrary point $w \in V(K) \cap \Gamma$. The $K$-analytic local ring of $\XX(\OO_K) \times (\Pi_n^{\dR}/F^0)(K)$ at $w$ is isomorphic to some Tate algebra $R = K\langle x_1, \dots, x_N \rangle$, and $\Gamma$ is defined locally near $w$ by $G_1 = \dots = G_r = 0$ for some $G_1, \dots, G_r \in R$.

The coordinates of the map $j_n$ (and hence the equations defining its graph $\Gamma$) are given by the $p$-adic iterated integration functionals $x \mapsto \int_b^x \omega$ of depth $n$ (where $\omega = \omega_1 \dots \omega_n$ is given by the choice of coordinate of the affine space $\Pi_n^{\dR}/F^0$). These iterated integration functionals are defined locally by the same differential equations in both the $p$-adic and complex analytic settings; hence, when the power series $G_1, \dots, G_r$ are viewed as complex power series (via an embedding $K \inject \CC$), they converge in a small ball $U_\CC$ around $w$ in $X(\CC)$, and they locally define the graph $\Gamma'$ of (a local lifting of) the holomorphic unipotent Albanese map
\[
\alpha_n \colon U_\CC \to (\Pi_n^{\dR}/F^0)(\CC) = D,
\]
which is also given by iterated integration (and hence by the same differential equations). Here, $D$ is the weak mixed Mumford--Tate domain from \S\ref{section:period-domains}.

Let $I$ be the defining ideal of $V$ as an algebraic subvariety of $X \times \Pi_n^{\dR}/F^0$. Choose functions $F_1, \dots, F_s \in I$ such that the equations $F_1 = \dots = F_s = G_1 = \dots = G_r = 0$, where $s + r = \codim(V(K) \cap \Gamma)$, locally define an analytic variety of the same dimension as $V(K) \cap \Gamma$ at $w$. (This is to avoid complications if $V(K) \cap \Gamma$ happens not to be an analytic local complete intersection at $w$.) By the assumption on the codimension \eqref{eqn:codim-assumption}, $s < \codim V$.

Since $F_1, \dots, F_s$ are regular functions, we can view them over $\CC$ without convergence issues; let $V' = \{F_1 = \dots = F_s = 0\}$. Consider any irreducible component $W'$ of $V' \cap \Gamma'$. By construction, $V' \cap \Gamma'$ is locally a complete intersection at $w$, so
\[
  \codim_{X \times D}(W') = s + r < \codim_{X \times D}(V) + r = \codim_{X \times D}(V') + \codim_{X \times D}(\Gamma').
\]
By Lemma \ref{lemma:higher-albanese}, $\pr_X(W')$ is contained in a proper weakly special subvariety of $X$. Since $V \subseteq V'$, it follows that $\pr_X(V(\CC) \cap \Gamma')$ is contained in a finite union of such varieties, and using the fixed embedding $K \inject \CC$, we obtain the same for $\pr_X(V(K) \cap \Gamma)$.
\qed

\section{Implications for Chabauty's method}
\label{section:chabauty}
In the case $n = 1$, motivated by Siksek's heuristic argument \cite[\S2]{Siksek13}, we note the following consequence of the Ax--Schanuel theorem for abelian varieties \cite[Thm.~1]{Ax72}:
\begin{proposition}
  \label{prop:n=1}
  Let $V$ be a smooth, projective, geometrically connected variety over $\Q$. Let $A$ be the Albanese variety of $V$. Let $d = \dim V$, let $g = \dim A$, and let $r$ be the rank of $A(\Q)$. Let $\iota\colon V \to A$ be the Abel--Jacobi map associated to a chosen base point $b \in V(\Q)$. Let $p$ be a prime of good reduction for $V$. Let $r + \delta$ be the $\Z_p$-rank of the Bloch--Kato Selmer group $H_f^1(\Gal(\bar{\Q}/\Q), T_p A)$. (Note that $\delta = 0$ if $\Sha_A[p^\infty]$ is finite, as is conjectured.) 

  Suppose $r + \delta \leq g - d$. Then
  \[
  \iota(V(\Q_p)) \cap \overline{A(\Q)},
  \]
  where the closure is in the $p$-adic analytic topology in $A(\Q_p)$, is contained in a finite union of cosets of proper abelian subvarieties of $A$.
\end{proposition}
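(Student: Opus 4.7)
The plan is to combine classical Chabauty with the $n = 1$ case of Corollary \ref{cor:main}, which is unconditional via Ax's theorem \cite[Thm.~1]{Ax72}. Recall that $\Pi_1^{\dR}/F^0$ is canonically isomorphic to $\Lie(A)$ as a $\Q_p$-affine space, and that the unipotent Albanese map on a residue disk centered at a base point agrees with $\log_p \circ \iota$ up to an additive constant. The Bloch--Kato exponential identifies $H_f^1(\Gal(\bar{\Q}_p/\Q_p), T_p A) \otimes \Q_p$ with $\Lie(A)(\Q_p)$, and composing with the localization map $\loc_p$ from $H_f^1(\Gal(\bar{\Q}/\Q), T_p A) \otimes \Q_p$ yields a $\Q_p$-linear subspace $Z_0 \subseteq \Lie(A)(\Q_p)$ of dimension at most $r + \delta$ containing $\log_p \overline{A(\Q)}$. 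The hypothesis $r + \delta \leq g - d$ then gives $\codim_{\Lie(A)} Z_0 \geq d = \dim V$.

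Since $V$ is smooth projective, $V(\Q_p)$ is a finite disjoint union of residue disks $\UU_i$; fix a base point $b_i \in \UU_i$ for each. If $v \in \UU_i$ satisfies $\iota(v) \in \overline{A(\Q)}$, then $\log_p \iota(v) \in Z_0$ (using that $\log_p$ has only torsion kernel), and hence the unipotent Albanese map with base point $b_i$ sends $v$ into $Z_i := Z_0 - \log_p \iota(b_i)$, a closed $\Q_p$-affine subvariety of $\Pi_1^{\dR}/F^0$ of codimension at least $d$. Applying Corollary \ref{cor:main} (with $n = 1$) to each $\UU_i$ with base point $b_i$ and target subvariety $Z_i$ yields a finite set $S_i \subseteq \UU_i$ and finitely many proper weakly special subvarieties of $V$ whose union covers $\UU_i \cap \iota^{-1}(\overline{A(\Q)}) \setminus S_i$. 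Aggregating over the finitely many residue disks gives a single finite set $S$ and finitely many weakly special subvarieties covering $V(\Q_p) \cap \iota^{-1}(\overline{A(\Q)}) \setminus S$.

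For the first canonical variation, weakly special subvarieties of $V$ are irreducible components of preimages $\iota^{-1}(a + B)$ for $B \subseteq A$ an abelian subvariety and $a \in A$, and a proper such subvariety has $\iota$-image contained in a coset of a proper abelian subvariety of $A$. Taking $\iota$-images and absorbing the finite set $\iota(S)$ into cosets of the trivial proper subvariety $\{0\} \subsetneq A$ yields the desired conclusion. I expect the main technical point to be the identification of weakly special subvarieties of $V$ for the first canonical variation with preimages of cosets of abelian subvarieties of $\Alb(V)$ in the sense of \cite[Definition 7.1]{Klingler17}; this is presumably standard but needs to be verified. The remaining work is just careful bookkeeping with the additive translations across the residue disks.
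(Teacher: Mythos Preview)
Your proposal is correct and follows essentially the same route as the paper: bound the codimension of the image of $\log_p$ by the Selmer rank hypothesis, apply Corollary~\ref{cor:main} in the unconditional $n=1$ case, and then identify the weakly special subvarieties with preimages of cosets of abelian subvarieties. Your treatment is in fact slightly more careful than the paper's, which does not spell out the translation across residue disks or the absorption of the finite exceptional set into cosets of the trivial subvariety; the paper also dispatches the identification of weakly special subvarieties in one line (``by comparison with the complex setting, \dots\ exactly the cosets of abelian subvarieties''), so your flagged technical point is indeed handled only summarily there.
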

\begin{proof}
  We have a commutative diagram
  \[
  \begin{tikzcd}
    V(\Q) \ar[r] \ar[d,"\iota"] & V(\Q_p) \ar[d] \\
    A(\Q) \otimes_{\Z} \Q_p \ar[r] \ar[d] & \Lie(A) \otimes \Q_p \ar[d] \\
    H_f^1(\Gal(\bar{\Q}/\Q), \Pi_1) \ar[r,"\log_p"] & (\Pi_1^{\dR}/F^0)(\Q_p)
  \end{tikzcd}
  \]
  where $\Pi_1 = T_p A \otimes_{\Z_p} \Q_p$, and where $\log_p$ is an algebraic map of $\Q_p$-varieties. We have
  \[
  \dim \Pi_1^{\dR}/F^0 = \dim A = g.
  \]
  Also,
  \[
  \dim H_f^1(\Gal(\bar{\Q}/\Q), \Pi_1) = \rank_{\Z_p} H_f^1(\Gal(\bar{\Q}/\Q), T_p A) = r + \delta.
  \]

  Suppose $r + \delta \leq g - d$. Then the image of $\log_p$ is an algebraic subvariety of $\Pi_1^{\dR}/F^0$ of codimension at least $d$. By Corollary \ref{cor:main} (which, recall, is known unconditionally in the case $n = 1$), there is a finite subset $S \subseteq V(\Q_p)$ such that $\iota^{-1}(A(\Q)) \setminus S$ is contained in a finite union of proper weakly special subvarieties of $V$.

  By comparison with the complex setting, weakly special subvarieties for the Albanese map are exactly the pullbacks to $V$ of bialgebraic subvarieties for the uniformization map of $A$, which are exactly the cosets of abelian subvarieties, completing the proof.
\end{proof}

\begin{remark}
  \begin{enumerate}
    \item If the condition $r + \delta \leq g - d$ remains true upon restriction to any abelian subvariety of $A$, then applying Proposition \ref{prop:n=1} inductively implies $\iota(V(\Q_p)) \cap \overline{A(\Q)}$, and hence $V(\Q)$, are finite. However, this does not always happen; see \cite[\S2.2]{Dogra19} for a counterexample, which demonstrates Siksek's heuristic does not always work and refutes the conjecture made in the author's thesis \cite[Conj.\ 5.1]{Hast18}.
    \item The motivating case is when $V = \Res_{\Q}^{F} X$ is the restriction of scalars of a curve $X/F$ with $F$ a number field. Let $g_X$ be the genus of $X$, and $J$ the Jacobian variety of $X$. Then $r$ is the rank of $J(F)$, $d = [F : \Q]$, and $g - d = d(g_X - 1)$.
    \item This is essentially proven in \cite[Cor.\ 2.1]{Dogra19}, which Dogra proved independently while this paper was in preparation. (Dogra doesn't explicitly state in Cor.\ 2.1 that the Chabauty locus is contained in a translate of a proper abelian subvariety, but that can be extracted from the proof.)
  \end{enumerate}
\end{remark}

\section{Implications for the Chabauty--Kim method}
\label{section:chabauty-kim}
In this section, we deduce Theorem \ref{thm:curves} from Corollary \ref{cor:main}. The following lemma is the key intermediate result:
\begin{lemma}
  \label{lemma:DH}
  In Situation \ref{curve-situation}, let $V \subseteq \Res_{\Q}^{F} X$ be an irreducible, positive-dimensional closed subvariety. Let $\varphi\colon V' \to V$ be a surjective morphism of $\Q$-varieties such that $V'$ is smooth and $\varphi$ is birational. Expand $S$ (if necessary) to include all primes of bad reduction for $V'$. Let $p$ be a rational prime that splits completely in $F/\Q$ such that $\pri \notin S$ for all $\pri$ lying above $p$. Let $\Pi_{V', n}$ be the depth $n$ unipotent fundamental group of $V'$. Then
  \[
  \lim_{n \to \infty} \codim_{\Pi_{V', n}^{\dR}/F^0} \log_p(H_f^1(\Gal(\bar{\Q}/\Q), \Pi_{V', n})) = \infty.
  \]
\end{lemma}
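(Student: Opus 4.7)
The plan is to show that $\dim(\Pi_{V', n}^{\dR}/F^0)$ grows in $n$ strictly faster than $\dim H_f^1(\Gal(\bar\Q/\Q), \Pi_{V', n}^{\et})$, since $\log_p$ factors through a $\Q_p$-linear map out of the latter into the former. The strategy is to reduce both sides, via the composite $V' \to V \inject \Res_\Q^F X$, to the analogous comparison for the curve $X$ over the number field $F$, where the hypotheses of Situation~\ref{curve-situation} provide bounds that are already known to work in the $F = \Q$ setting.

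For the de~Rham side, I would argue that $\dim(\Pi_{V', n}^{\dR}/F^0)$ grows exponentially in $n$ using the Malcev description of the unipotent completion of $\pi_1(V')$. Since $V'$ is smooth, positive-dimensional, and has nontrivial abelianized fundamental group (it dominates the positive-dimensional subvariety $V$ of $\Res_\Q^F X$, which carries a nontrivial $H^1$ from its ambient restriction of scalars), the ranks of the successive graded pieces of the lower central series grow exponentially in $n$, and the Hodge filtration $F^0$ removes only a fixed proportion in each graded piece.

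For the Galois-cohomology side, the main input is restriction of scalars. Using $(\Res_\Q^F X) \otimes_\Q \bar\Q \cong \prod_{\sigma\colon F \inject \bar\Q} X^\sigma$, the $\Gal(\bar\Q/\Q)$-representation on the unipotent \'etale fundamental group of $\Res_\Q^F X$ is induced from the $\Gal(\bar F/F)$-representation on that of $X$. Shapiro's lemma then identifies
\[
  H_f^1(\Gal(\bar\Q/\Q), \Pi_{\Res_\Q^F X, n}^{\et}) \cong H_f^1(\Gal(\bar F/F), \Pi_{X, n}^{\et}),
\]
with matching Selmer conditions thanks to the assumption that $p$ splits completely in $F$ and that $S$ contains all primes of bad reduction. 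The hypotheses of Situation~\ref{curve-situation} are exactly those under which $\dim H_f^1(\Gal(\bar F/F), \Pi_{X, n}^{\et})$ is known to grow strictly slower than $\dim(\Pi_{X, n}^{\dR}/F^0)$: case~(1) follows from Bloch--Kato/Fontaine--Mazur applied to each lower-central graded piece, and cases~(2)--(4) follow from the explicit mixed-Tate or CM structure of the graded pieces, exactly as in the already-known $F = \Q$ versions.

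The main obstacle is passing from $V' = \Res_\Q^F X$ to an arbitrary smooth birational model $V'$ of a proper subvariety $V \subsetneq \Res_\Q^F X$, because $\Pi_{V', n}^{\et}$ is not literally $\Pi_{\Res_\Q^F X, n}^{\et}$ and can in fact be larger. To bound its Selmer cohomology, I would filter by the lower central series so that $\dim H_f^1(\Gal(\bar\Q/\Q), \Pi_{V', n}^{\et}) \leq \sum_k \dim H_f^1(\Gal(\bar\Q/\Q), \mathrm{gr}_k \Pi_{V', n}^{\et})$, and observe that each graded piece is a subquotient of tensor powers of $H_1^{\et}(V'_{\bar\Q}, \Q_p)$. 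Using the pullback along $V' \to \Res_\Q^F X$, these tensor powers compare to the corresponding ones for the restriction of scalars, and the Shapiro's-lemma bound above then controls the summands. Verifying that this comparison preserves the exponentially growing gap between de~Rham and Selmer dimensions---including handling the possible kernel of the pullback on fundamental groups---is the technical crux.
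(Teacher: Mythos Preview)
Your outline has the right overall shape, but the step you flag as the ``technical crux'' is genuinely unresolved, and the paper's solution is a specific construction you are missing. The paper does \emph{not} work with $\Pi_{V',n}$ directly. Instead it passes to the quotient $\Upsilon_n$, defined as the image of $\Pi_{V'}$ inside $\Pi_{R,n}$ (where $R = \Res_{\Q}^{F} X$), and shows the codimension goes to infinity already for this quotient. This buys two things at once. First, since $\Upsilon_n \subseteq \Pi_{R,n}$, each graded piece $Z_n$ is a $G_{\Q}$-subrepresentation of $Z_{R,n}$, and over $F$ one has $Z_{R,n} \cong Z_{X,n}^{\oplus [F:\Q]}$; semisimplicity (Faltings--Tate in case~(1), the CM splitting in cases~(3)--(4), the Tate-twist structure in case~(2)) then realizes $Z_n$ as a \emph{direct summand}, so $\dim H^2(G_T, Z_n) \leq [F:\Q]\cdot \dim H^2(G_{F,T}, Z_{X,n})$ via corestriction. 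Second, because $V'_F$ dominates $X_F^{\times d}$ with $d = \dim V'$, \cite[Lemma~4.1]{EH18} gives a $G_F$-equivariant surjection $Z_n \twoheadrightarrow Z_{X,n}^{\oplus d}$, hence $\dim Z_n \geq d\cdot\dim Z_{X,n}$. The two-sided bound $d\cdot\dim Z_{X,n} \leq \dim Z_n \leq [F:\Q]\cdot\dim Z_{X,n}$ is exactly what lets one feed in the known asymptotics for $X$.

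By contrast, your approach tries to control $\Pi_{V',n}$ itself. The claim that its graded pieces ``grow exponentially'' is unjustified: we know nothing about $\pi_1(V')$ beyond what the map to $R$ gives, and in cases~(3) and~(4) the paper in fact uses further quotients whose graded pieces grow only linearly or polynomially. More seriously, saying that tensor powers of $H_1(V')$ ``compare'' to those of $H_1(R)$ via pullback does not yield a bound on $H^2$ of the graded pieces of $\Pi_{V'}$: a subquotient of $H_1(V')^{\otimes n}$ need not be a subquotient of $H_1(R)^{\otimes n}$ unless you first kill the kernel of $H_1(V') \to H_1(R)$ --- which is precisely what passing to $\Upsilon_n$ accomplishes. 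Your Shapiro's-lemma idea is a reasonable alternative to the paper's corestriction argument for $R$ itself, but it does nothing for the subvariety $V'$, and that is where the content lies.
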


In other words, the codimension of the image of the global Selmer variety inside the local Selmer variety grows without bound as $n \to \infty$.

Before proving the lemma, we prove Theorem \ref{thm:curves} assuming the lemma.

\begin{proof}[Proof of Theorem \ref{thm:curves}]
  Recall that the goal is to prove that the set
  \[
  \UU_{V', n} := j_{V', n}^{-1}(\log_p(H_f^1(\Gal(\bar{\Q}/\Q), \Pi_{V', n}))) \subseteq \UU,
  \]
  where $\UU \subseteq \VV'(\OO_{F_\pri})$ is the residue disk containing $b$, is non-Zariski-dense in $V'$.

  Adopting the notation of Lemma \ref{lemma:DH}, let $Z$ be the image of the regular map
  \[
  \log_p\colon H_f^1(\Gal(\bar{\Q}/\Q), \Pi_{V', n}) \to \Pi_{V', n}^{\dR}/F^0.
  \]
  By Lemma \ref{lemma:DH}, if $n$ is sufficiently large, then $Z$ has codimension at least $\dim V'$. By Corollary \ref{cor:main}, it follows that $j_{V',n}^{-1}(Z)$ is contained in the union of a finite set and finitely many weakly special subvarieties of $V'$. In particular, $j_{V',n}^{-1}(Z)$ is non-Zariski-dense in $V'$.
\end{proof}

\begin{remark}
  Using the method of \cite[\S5]{Dogra19}, one can show additionally that the weakly special subvarieties arising above are defined over a number field, hence (by induction on dimension) that $\UU_n$ itself is finite. Since this is not needed to prove finiteness of $\XX(\OO_{F, S})$, we do not do so here.
\end{remark}

In the remainder of this section, we verify Lemma \ref{lemma:DH} in each of the cases of Situation \ref{curve-situation}.

\subsection{Generalities}
\label{subsection:chabauty-kim-generalities}

Let $X$ be a smooth, geometrically connected, hyperbolic algebraic curve over a number field $F$. Let $R = \Res_{\Q}^{F} X$, and let $V \subseteq R$ be an irreducible, positive-dimensional closed subvariety. Let $\varphi\colon V' \to V$ be a surjective regular map of $\Q$-varieties such that $V'$ is smooth and $\varphi$ is birational.

Let $S$ be a finite set of primes of $\Q$ containing all of the primes of bad reduction for $X$ and for $V'$. Let $p \notin S$ be a rational prime that splits completely in $F/\Q$. Let $T = S \cup \{p\}$. Let $G_T = \Gal(\Q_T/\Q)$, where $\Q_T$ is the maximal algebraic extension of $\Q$ unramified outside $T$. Let $\Pi_{V', n}$ be the depth $n$ unipotent fundamental group of $V'$.

We will prove that, in each of the cases of Situation \ref{curve-situation}, for a suitable quotient $U_n$ of $\Pi_{V', n}$, we have the following inequality of dimensions: There exists $c < 1$ such that
\[
\dim H_f^1(G_T, U_n) < c \cdot \dim U_n^{\dR}/F^0 U_n^{\dR}.
\]
This immediately implies the conclusion of Lemma \ref{lemma:DH}.

In each case, the quotient $U_n$ will in fact be a quotient of the algebraic group
\[
\Upsilon_n := \Upsilon/(\Upsilon \cap \Pi_R^{n + 1}),
\]
where $\Upsilon$ is the image of the induced morphism $\Pi_{V'} \to \Pi_R$ of pro-unipotent fundamental groups, and the superscript denotes the level of the lower central series of $\Pi_R$. (This is the construction of \cite[\S3.3]{EH18}, except here we start with the full pro-unipotent fundamental groups, not the metabelian quotients.) Note that, by construction, $\Pi_{V', n}$ surjects onto $\Upsilon_n$, and $\Upsilon_n$ is an algebraic subgroup of $\Pi_{R, n}$.

In this subsection, before splitting into the different cases, we state some initial facts that apply in general. These reductions closely follow those of \cite{Kim-alb} and \cite{CK}.

First, we have an inequality
\[
\dim H_f^1(G_T, U_n) \leq \dim H^1(G_T, U_n),
\]
so it suffices to bound the latter dimension. We also have short exact sequences
\[
0 \to Z_n \to U_n \to U_{n - 1} \to 0,
\]
where $Z_n = U^n/U^{n + 1}$ is the $n$-th graded piece with respect to a filtration on $U$ (for example, the filtration induced by the lower central series filtration on $\Pi_R$). Applying cohomology, we obtain an exact sequence
\[
H^0(G_T, U_{n - 1}) \to H^1(G_T, Z_n) \to H^1(G_T, U_n) \to H^1(G_T, U_{n - 1}).
\]
By a weight argument, $H^0(G_T, U_{n - 1}) = 0$, so it follows that
\[
\dim H^1(G_T, U_N) \leq \sum_{n=1}^{N} \dim H^1(G_T, Z_n).
\]

Now that we have reduced to Galois cohomology with abelian coefficients, we can use standard tools of Galois cohomology. By the global Poincar\'e--Euler characteristic formula,
\[
\dim H^1(G_T, Z_n) = \dim H^0(G_T, Z_n) + \dim H^2(G_T, Z_n) + \dim Z_n - \dim Z_n^{+},
\]
where $Z_n^{+}$ is the subspace fixed by the action of complex conjugation on $Z_n$. Again by a weight argument, we have $\dim H^0(G_T, Z_n) = 0$.

On the other side, we have
\[
  \dim U_N^{\dR}/F^0 U_N^{\dR} = \dim U_N^{\dR} - \dim F^0 U_N^{\dR} = \sum_{n=1}^{N} \dim Z_n - \dim F^0 U_N^{\dR}
\]
So, to prove Lemma \ref{lemma:DH}, it suffices to show the following asymptotic inequalities:
\begin{enumerate}
  \item $\sum_{n=1}^{N} \dim Z_n^{+} \geq c \cdot \sum_{n=1}^{N} \dim Z_n$ for some $c > 0$;
  \item $\dim F^0 U_n^{\dR} \ll \dim U_n$ for $n \gg 1$; and
  \item $\dim H^2(G_T, Z_n) \ll \dim Z_n$ for $n \gg 1$.
\end{enumerate}
The first two turn out to be straightforward in general, while the bound on $H^2$ is difficult.

\subsection{Arbitrary hyperbolic curves, conditionally}
\label{subsection:hyperbolic-curves}
In \cite{Kim-alb}, Kim proves the dimension hypothesis for an arbitrary curve $X$ over $\Q$ of genus $g \geq 2$, conditional on either the Bloch--Kato conjecture or the Fontaine--Mazur conjecture. We generalize this to varieties $V' \onto V \subseteq R = \Res_{\Q}^{F} X$ as above.

In this section, let $U_n = \Upsilon_n$, as defined in \S\ref{subsection:chabauty-kim-generalities}, and let $Z_n = (\Upsilon \cap \Pi_R^n)/(\Upsilon \cap \Pi_R^{n + 1})$, where the superscripts denote the lower central series.

Let $Z_{X, n} = \Pi_X^n/\Pi_X^{n + 1}$ be the $n$-th graded piece of $\Pi_X$, and likewise for $Z_{R, n}$. Since $R_F \cong X_F^{\times [F : \Q]}$, we have a dominant morphism $V'_{F} \to X_F^{\times d}$, where $d = \dim V'$. By \cite[Lemma 4.1]{EH18}, this induces a surjection of pro-unipotent fundamental groups, hence a $\Gal(\bar{F}/F)$-equivariant surjection $Z_n \onto Z_{X, n}^{\oplus d}$, where $Z_{X, n} = \Pi_X^n/\Pi_X^{n+1}$ is the $n$-th graded piece of $\Pi_X$.

We also have a $\Gal(\bar{\Q}/\Q)$-equivariant injection $Z_n \inject Z_{R, n}$ (since by construction $Z_n$ is a subset of $Z_{R, n}$), and a $\Gal(\bar{F}/F)$-equivariant isomorphism $Z_R \cong Z_{X, n}^{\oplus [F : \Q]}$. Putting these together yields dimension bounds
\[
d \cdot \dim Z_{X, n} \leq \dim Z_n \leq [F : \Q] \cdot \dim Z_{X, n}.
\]

Let $b_1 = \dim Z_{X, 1}$, so $b_1 = 2g$ if $X$ is compact and $b_1 = 2g + s - 1$ if $X$ is non-compact with $s$ punctures. By \cite[\S3]{Kim-alb}, in the limit as $n \to \infty$,
\[
  \dim Z_{X, n} = \frac{b_1^n}{n} + o(b_1^n)
\]
if $X$ is non-compact and
\[
  \dim Z_{X, n} = \frac{(g + \sqrt{g^2 - 1})^n}{n} + o(b_1^n)
\]
if $X$ is compact, while
\[
  \dim F^0 \Pi_{X, n}^{\dR} \leq g^n,
\]
and, assuming the Fontaine--Mazur conjecture or the Bloch--Kato conjecture,
\[
  \dim H^2(G_{F, T}, Z_{X, n}) \leq P(n) g^n
\]
for some polynomial $P(n)$.

Since $Z_n$ has weight $n$, by comparison with complex Hodge theory, we have $\dim Z_n^{+} = \frac{1}{2} \dim Z_n$ for $n$ odd. The above dimension bounds on $Z_n$ and $Z_{X, n}$ then yield
\[
\sum_{n=1}^{N} \dim Z_n^+ \geq \sum_{\substack{n \leq N \\ n \text{ odd}}} \dim Z_n^+ = \frac{1}{2} \sum_{\substack{n \leq N \\ n \text{ odd}}} \dim Z_n \geq c \sum_{n=1}^{N} \dim Z_n
\]
for some constant $c > 0$, as was to be shown.

Next, the injection $U_n^{\dR} \inject \Pi_{R, n}^{\dR} \cong (\Pi_{X, n}^{\dR})^{\oplus [F : \Q]}$ is compatible with the Hodge structures, so
\[
  \dim F^0 U_n^{\dR} \leq [F : \Q] \cdot \dim F^0 \Pi_{X, n}^{\dR} = O(g^n).
\]
Since $g < b_1$, it follows that $F^0 U_n^{\dR}$ does not contribute to the asymptotic.

We can compare $H^2(G_T, Z_n)$ to $H^2(G_T, Z_{X, n})$ using the idea of \cite[Lemma 6.4]{EH18}: The corestriction map
\[
  H^2(G_{F, T}, Z_n) \to H^2(G_T, Z_n)
\]
is surjective because $Z_n$ is a divisible abelian group. By the semisimplicity theorem of Faltings and Tate, $Z_{X, 1}$ is a semisimple $G_{F, T}$-representation, so $(Z_{X, 1}^{\otimes n})^{\oplus [F : \Q]}$ is semisimple. Hence, the surjection $Z_{X, 1}^{\otimes n} \onto Z_{X, n}$ splits and realizes $Z_{X, n}$ as a direct summand, so $Z_{X, n}$ is also semisimple.

The $\Gal(\bar{F}/F)$-equivariant injection $Z_n \inject Z_{X, n}^{\oplus [F : \Q]}$ therefore realizes $Z_n$ as a direct summand. Cohomology preserves direct summands, so
\[
  \dim H^2(G_T, Z_n) \leq \dim H^2(G_{F, T}, Z_n) \leq [F : \Q] \cdot \dim H^2(G_{F, T}, Z_{X, n}) \leq P(n) g^n.
\]
Thus $H^2(G_T, Z_n)$ also does not contribute to the asymptotic.

This completes the proof of Lemma \ref{lemma:DH} in case (1) of Situation \ref{curve-situation}.
\qed

\subsection{Genus zero}
In this section, suppose $X = \proj^1 \setminus \{p_1, \dots, p_s\}$ with $s \geq 3$ and $p_1, \dots, p_s \in \OO_{F, S} \cup \{\infty\}$. Then $Z_{X, 1} = \Q_p(1)^{s - 1}$, and more generally, $Z_{X, n} = \Q_p(n)^{r_n}$ for some $r_n > 0$.

As explained in \cite{Kim-siegel} and \cite[\S8]{Hadian11}, we have $F^0 \Pi_X = 0$. Moreover, by the Soul\'e vanishing theorem, $H^1(G_T, \Q_p(2n)) = 0$ and $\dim H^1(G_T, \Q_p(2n + 1)) = 1$ for any $n \geq 1$. So
\[
\dim H_f^1(G_T, \Pi_{X, n}) \leq \sum_{i=1}^{n} \dim H^1(G_T, Z_{X, i}) = R + r_3 + r_5 + \dots + r_{2\lfloor n/2 \rfloor},
\]
where $R = \dim H^1(G_T, \Q_p(1))$. Since each $r_i$ is positive, for sufficiently large $n$, this is less than
\[
\dim \Pi_{X, n}/F^0 = \dim \Pi_{X, n} = (s - 1) + r_2 + \dots + r_n.
\]

To generalize to our setting, let $U_n = \Upsilon_n$ be the same quotient of $\Pi_{V', n}$ defined in \S\ref{subsection:chabauty-kim-generalities}. Then, as before, we have a $\Gal(\bar{\Q}/\Q)$-equivariant injection $U_n \inject \Pi_{R, n}$. Choosing an embedding $X \inject \aff_F^1$ as an open subscheme, functoriality of restriction of scalars gives an open embedding $R \inject \aff_{\Q}^{[F : \Q]}$, so $R$ is a rational variety.

Hence, by \cite[Prop.\ 4.15]{DG05}, $\Pi_R$ is mixed Tate, and thus so is $U_n$. So we can apply the same Soul\'e vanishing argument as above to see that $\sum_{i=1}^{n} \dim H^1(G_T, Z_i) \ll \dim U_n$. This completes the proof of Lemma \ref{lemma:DH} in case (2) of Situation \ref{curve-situation}.
\qed

\subsection{Punctured CM elliptic curves}
In this section, suppose $X = E \setminus \{O\}$, where $E$ is a CM elliptic curve over $F$, and $O \in E$ is the identity. We generalize Kim's results \cite[Thm.\ 0.1]{Kim-elliptic} to the setting of Lemma \ref{lemma:DH}.

Choose $p$ to split as $p = \pi \bar{\pi}$ in the CM field $K$. As described in \cite[\S1]{Kim-elliptic}, this gives a splitting
\[
  \Pi_{X, 1} \cong V_\pi(E) \oplus V_{\bar{\pi}}(E)
\]
into the rational $\pi$-adic and $\bar{\pi}$-adic Tate modules, which are one-dimensional and we take to be generated by elements $e$ and $f$, respectively. The Lie algebra $L$ of $\Pi_X$ is the pro-nilpotent completion of the free Lie algebra on generators $e$ and $f$, and $L$ has a Hilbert basis of Lie monomials in $e$ and $f$. 

As Kim shows, the Lie ideals $L_{\geq n, \geq n}$ generated by monomials of degree at least $n$ in each generator are Galois-equivariant. Let $W$ be the quotient of $\Pi_X$ corresponding to the quotient Lie algebra $L/L_{\geq 2, \geq 2}$. Let $W_n = W/W^{n+1}$ and $Z_{X, n} = W^n/W^{n + 1}$, where $W^\bullet$ denotes the lower central series filtration of $W$. Then we have a $\Gal(\bar{\Q}/\Q)$-equivariant isomorphism
\[
  Z_{X, n} \cong \Q_p(\chi^{n - 2}(1)) \oplus \Q_p(\bar{\chi}^{n - 2}(1)),
\]
where $\chi$ and $\bar{\chi}$ are the Galois characters associated to the Galois actions on $V_\pi(E)$ and $V_{\bar{\pi}}(E)$, respectively.

Let $U_n$ be the image of $\Upsilon_n$ in $W_n$, and let $Z_n$ be the image of $(\Upsilon \cap \Pi_R^n)/(\Upsilon \cap \Pi_R^{n+1})$ in $Z_{X,n}$. As explained in \cite[\S3]{Kim-elliptic}, we have $\dim F^0 Z_{X, 1}^{\dR} = \dim F^0 Z_{X, 2}^{\dR} = 1$, while $F^0 Z_{X, n}^{\dR} = 0$ for all $n \geq 3$, so the $F^0$ term does not contribute to the asymptotic. Also, by \cite[Claim 3.1]{Kim-elliptic}, $H^2(G_T, Z_{X, n}) = 0$ for all sufficiently large $n$. By the argument of Subsection \ref{subsection:hyperbolic-curves}, it follows that $H^2(G_T, Z_n) = 0$ as well. (Since $E$ has CM, we also do not need to appeal to the semisimplicity theorem, since $\Pi_{X, 1}$ is a direct sum of Galois characters.)

In this setting, $\dim Z_n$ is not growing: $\dim Z_{X, n} = 2$ for all $n$, so $2d \leq \dim Z_n \leq 2 [F : \Q]$. So, what we need to show is that $\dim Z_n^{+} < \dim Z_n$ for infinitely many $n$. But this follows by the same argument as before: by comparison with complex Hodge theory, $\dim Z_n^{+} = \frac{1}{2} \dim Z_n$ for $n$ odd. This completes the proof of Lemma \ref{lemma:DH} in case (3) of Situation \ref{curve-situation}.
\qed

\subsection{Curves dominating a curve with CM Jacobian}
In \cite{EH18}, the dimension hypothesis is proved for a smooth projective hyperbolic curve $X$ over $\Q$ such that there exists a smooth projective hyperbolic curve $Y/\Q$ with CM Jacobian and a dominant map $f\colon X_{\bar{\Q}} \to Y_{\bar{\Q}}$ (corresponding to case (4) of Situation \ref{curve-situation}). We now verify Lemma \ref{lemma:DH} in this setting.

In this section, $U_n$ is the following quotient of $\Upsilon_n$: Let $F'/F$ be a Galois extension such that $f$ is defined over $F'$, and let $R_Y = \Res_{\Q}^{F'}(Y)$. For any algebraic group $W$, let $\Psi_W = \Pi_W/\Pi_W^{(3)}$ be the metabelian quotient (where the superscript in parentheses denotes the derived series). Let $U$ be the image of $\Pi_{V'}$ in $\Psi_{R_Y}$, and let $U_n = U/(U \cap \Psi_{R_Y}^{n+1})$.

Let $Z_{X, n} = \Psi_Y^n/\Psi_Y^{n + 1}$ be the $n$-th graded piece of $\Psi_Y$, and likewise for $Z_{R_Y, n}$. Let $Z_n = Z_{R_Y, n} \cap U_n$. Since $(R_Y)_{F'} \cong Y_{F'}^{\times [F' : \Q]}$, we have a dominant morphism $V'_{F'} \to Y_{F'}^{\times d}$, where $d = \dim V'$. By \cite[Lemma 4.1]{EH18}, this induces a surjection of pro-unipotent fundamental groups, hence a $\Gal(\bar{F'}/F')$-equivariant surjection $Z_n \onto Z_{Y, n}^d$, where $Z_{Y, n} = \Psi_Y^n/\Psi_Y^{n+1}$ is the $n$-th graded piece of $\Psi_Y$.

We also have a $\Gal(\bar{\Q}/\Q)$-equivariant injection $Z_n \inject Z_{R_Y, n}$, and a $\Gal(\bar{F'}/F')$-equivariant isomorphism $Z_{R_Y} \cong Z_{Y, n}^{\oplus [F' : \Q]}$. Putting these together yields dimension bounds
\[
d \cdot \dim Z_{Y, n} \leq \dim Z_n \leq [F' : \Q] \cdot \dim Z_{Y, n}.
\]
Note that, as computed in \cite{CK}, we have $\dim Z_{Y, n} \sim An^{2g - 1}$ for some constant $A > 0$ in the limit as $n \to \infty$.

We obtain $F^0 Z_n^{\dR} = O(n^g)$ exactly as in \cite[Lemma 6.1]{EH18}; note that since the Hodge filtration is compatible with extension of the base field, it doesn't actually matter whether $p$ splits completely in $F'$.

Likewise, just as in \cite[Lemma 6.4]{EH18}, we have $\dim H^2(G_T, Z_n) = O(n^{2g - 2})$. To briefly summarize the argument: Since the Jacobian of $Y$ has CM, the Tate module $Z_{Y, 1}$ splits as a direct sum of characters, so the surjection $Z_{Y, 1}^{\otimes n} \onto Z_{Y, n}$ splits, implying $Z_{Y, n}$ is also semisimple. Thus the inclusion $Z_n \inject Z_{Y, n}^{\oplus [F' : \Q]}$ realizes $Z_n$ as a direct summand. Cohomology preserves direct summands, so
\[
  \dim H^2(G_T, Z_n) \leq [F' : \Q] \dim H^2(G_{F', T}, Z_{Y, n}) = O(n^{2g - 2}).
\]

Finally, it remains to show that $\dim Z_n^{+} \geq c \cdot \dim Z_n$ for some constant $c > 0$. This follows from a minor modification of \cite[\S7]{EH18}: We pick a $\Q_p$-basis of eigenvectors for the action of complex conjugation on $Z_{V', 1}$, project this down to a $\Q_p$-basis of $Z_1$, and then use the combinatorial argument of \cite[Lemma 7.3]{EH18} to construct a sufficiently large subspace invariant under complex conjugation. The only modification is that we are using the surjection $U_n \onto U_{Y, n}^{\oplus d}$, so the argument is carried out for the direct sum of $d$ copies of $Z_{Y, n}$. This just multiplies the dimensions by a factor of $d$, which poses no problem for our application.

Putting these together, we have proven Lemma \ref{lemma:DH} in case (4) of Situation \ref{curve-situation}.
\qed

\section{Future directions}
\label{section:future}
Assuming the Ax--Schanuel conjecture for variations of mixed Hodge structure, the above method can be used to prove Lang's conjecture on non-Zariski-density of rational points for any variety $V$ satisfying the asymptotic dimension hypothesis
\[
\lim_{n \to \infty} \codim_{\Pi_{V, n}^{\dR}/F^0} \log_p(H_f^1(\Gal(\bar{\Q}/\Q), \Pi_{V, n})) = \infty.
\]
A natural question is thus which varieties of general type satisfy this condition.

Unfortunately, aside from the case of curves, many varieties of general type are simply-connected, in which case $\Pi_{V, n}$ is trivial and the dimension hypothesis cannot hold. It is an interesting problem to find classes of varieties of general type for which the dimension hypothesis does hold, but Lang's conjecture is not already known. One useful test case may be varieties of the form $\proj^n \setminus D$, where $D$ is an effective divisor; the Selmer varieties may be amenable to a more explicit description in such cases.

Another problem is to make this higher-dimensional method algorithmic. An interesting test case for this would be restrictions of scalars of $\proj^1 \setminus \{0, 1, \infty\}$ over a number field. Such an algorithm would require both explicit computation of the special subvarieties occurring in the recursive construction in the proof of Theorem \ref{thm:curves}, and a method for computing the finite ``exceptional sets'' at each stage.
\printbibliography

\end{document}